\newcommand*{\sheafhom}{\mathcal{H}\kern -.5pt om}
\numberwithin{equation}{section} 
\numberwithin{figure}{section} 
\numberwithin{table}{section} 
\newtheorem{thm}{Theorem}[section]
\newtheorem{cor}[thm]{Corollary}
\newtheorem{prop}[thm]{Proposition}
\theoremstyle{definition}
\theoremstyle{remark}
\newtheorem{rem}[thm]{Remark}
\DeclareMathOperator{\lk}{lk}
\DeclareMathOperator{\Susp}{Susp}
\DeclareMathOperator{\Cone}{Cone}
\DeclareMathOperator{\Ast}{ast}
\newcommand{\horrule}[1]{\rule{\linewidth}{#1}} 
\title{	
	\normalfont \normalsize 
	\textsc{} \\ [25pt] 
	\horrule{0.5pt} \\[0.4cm] 
	\huge Repeated Lefschetz-like decompositions for flag doubly Cohen--Macaulay simplicial complexes and gamma vectors of flag spheres
	

	\horrule{2pt} \\[0.5cm] 
}
\author{Soohyun Park \\ \href{mailto:soohyun.park@mail.huji.ac.il}{soohyun.park@mail.huji.ac.il} } 
\date{\normalsize November 14, 2024} 
\begin{document}
	
	\maketitle 
	
	\begin{abstract}
		\noindent We find decompositions of $h$-polynomials of flag doubly Cohen--Macaulay simplicial complex that yield a direct connection between gamma vectors of flag spheres and constructions used to build them geometrically. More specifically, they are determined by iterated double suspensions and a ``net nonnegative set of edge subdivisions'' taking it to the given flag doubly Cohen--Macaulay simplicial complex. By a ``net nonnegative set of edge subdivision'', we mean a collection of edge subdivisions and contractions where there are at least as many edge subdivisions as contractions. \\
		
		\noindent Returning to the flag spheres, these repeated decompositions involve links over collections of disjoint edges and give an analogue of a Lefschetz map that applies to each step of the decomposition. The constructions used also give a direct interpretation of the Boolean decompositions coming from links and those of the entire simplicial complex. Roughly speaking, the Boolean vs. non-Boolean distinction is used to measure how far a flag sphere is from being the boundary of a cross polytope. An analogue of this statement for flag doubly Cohen--Macaulay simplicial complexes would replace boundaries of cross polytopes by repeated suspensions of links over edges of the given simplicial complex.
	\end{abstract}

	
	\section*{Introduction}
	We find an interpretation of the gamma vector of a flag sphere $\Delta$ in terms of a repeated decomposition coming from a ``net'' nonnegative number of edge subdivisions connecting the double suspension $\Susp^2 \lk_\Delta(e)$ of its link $\lk_\Delta(e)$ over an edge $e \in \Delta$ to Boolean decompositions $\Gamma = \{ F \cup G : F \in S, G \in 2^{[d - 2|F|]} \}$ (the vertex set $[d]$ being disjoint from $S$) of simplicial complexes $\Gamma$ whose $f$-vectors are equal to $h$-vectors of $\Delta$ (Theorem \ref{booldecgeo}). By ``net nonnegative edge subdivisions'', we mean a sequence of edge subdivisions and contractions where there are at least as many subdivisions as contractions. An application to local and global decompositions over links $\lk_\Delta(e)$ and the entire simplicial complex $\Delta$ is given in Corollary \ref{locglobgeo}. \\
	
	We note that the main repeated decomposition is somewhat analogous to a Lefschetz map and that the decomposition applies to pure flag doubly Cohen--Macaulay simplicial complexes with $\dim \Ast_\Delta(p) = \dim \Delta$ in general (Proposition \ref{susplowerbd}). Along the way, this gives an explicit way of thinking about the Boolean part $2^{[d]}$ of $\Gamma$ in terms of the boundary of a cross polytope of dimension $d$ and the non-Boolean part as a collection repeated net nonnegative edge subdivisions giving the ``remainder'' taking $\Susp^2 \lk_{\Delta_i}(e_i)$ for various $\Delta_i$ and edges $e_i \in \Delta_i$ (Proposition \ref{vertsubdivcount}). Finally, we note that the same suspension vertices may play different roles in a Boolean decomposition for indices giving faces of different sizes although they can be counted as if they belong to a Boolean partition (Remark \ref{suspbool}). \\

	\section{Repeated double suspension-net nonnegative edge subdivision decompositions}
	
	We find an analogue of a Lefschetz map that directly leads to a geometric interpretation of the gamma vector coordinates and Boolean decompositions in terms of maps connecting PL homeomorphic flag simplicial complexes. \\
	
	\begin{prop} \label{susplowerbd}
		Let $\Delta$ be a pure Cohen--Macaulay simplicial complex such that $\Ast_\Delta(p) = \Delta \setminus p$ is also Cohen--Macaulay and $\dim \Ast_\Delta(p) = \dim \Delta$ for any vertex $p \in V(\Delta)$. The first condition is called being ``doubly Cohen--Macaulay'' (p. 21 of \cite{Athsom}). Then, we have that $h_\Delta(x) \ge (1 + x) h_{\lk_\Delta(p)}(x)$ if $\dim \Ast_\Delta(p) = \dim \Delta$ (e.g. if $\Delta$ a sphere). Since $\Delta$ is flag, this implies that $h_\Delta(x) \ge (1 + x)^{|F|} h_{\lk_\Delta(F)}(x)$ for any face $F \in \Delta$ such that $\dim \Ast_\Delta(F) = \dim \Delta$. For example, $h_\Delta(x) \ge (1 + x)^2 h_{\lk_\Delta(e)}(x)$ for any edge $e \in \Delta$ if $\dim \Ast_\Delta(e) = \dim \Delta$. \\ 
	\end{prop}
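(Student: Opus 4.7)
The plan is to reduce the inequality to a coefficient-wise comparison $h_{\Ast_\Delta(p)} \geq h_{\lk_\Delta(p)}$, prove this via Stanley--Reisner theory, and then iterate to get the general face case. I would start by partitioning the faces of $\Delta$ according to whether they contain $p$: faces not containing $p$ are precisely those of $\Ast_\Delta(p)$, while faces containing $p$ correspond bijectively (by removal of $p$) to faces of $\lk_\Delta(p)$ with dimension shifted down by one. Writing $f$-polynomials as $f(t) = \sum_i f_{i-1} t^i$ yields $f_\Delta(t) = f_{\Ast_\Delta(p)}(t) + t \, f_{\lk_\Delta(p)}(t)$. The hypothesis $\dim \Ast_\Delta(p) = \dim \Delta = d-1$ ensures that $\Delta$ and $\Ast_\Delta(p)$ share the same $(1-x)^d$ factor when converting to $h$-polynomials, so the substitution $t = x/(1-x)$ produces
\[
h_\Delta(x) = h_{\Ast_\Delta(p)}(x) + x \cdot h_{\lk_\Delta(p)}(x),
\]
and the desired bound becomes equivalent to $h_{\Ast_\Delta(p)}(x) \geq h_{\lk_\Delta(p)}(x)$ coefficient-wise.

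The hard part will be establishing this last inequality, and this is where I expect the main obstacle. Since $\Delta$ is flag, $\lk_\Delta(p)$ is the full subcomplex of $\Ast_\Delta(p)$ on the vertex set $V(\lk_\Delta(p))$, giving a graded surjection of Stanley--Reisner rings $k[\Ast_\Delta(p)] \twoheadrightarrow k[\lk_\Delta(p)]$ with kernel generated by the variables for vertices not adjacent to $p$. The doubly Cohen--Macaulay hypothesis makes $\Ast_\Delta(p)$ Cohen--Macaulay of dimension $d-1$, so its Stanley--Reisner ring has Krull dimension $d$ and is Cohen--Macaulay. I would choose a generic linear system of parameters $\theta_1, \ldots, \theta_d$ of $k[\Ast_\Delta(p)]$ whose first $d-1$ images modulo the kernel form a linear system of parameters of the Krull dimension $d-1$ ring $k[\lk_\Delta(p)]$. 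Passing to artinian reductions, where Cohen--Macaulayness forces the Hilbert series to equal $h_{\Ast_\Delta(p)}(x)$ and $h_{\lk_\Delta(p)}(x)$ respectively, the induced graded surjection of artinian rings gives the coefficient-wise bound. The delicate step is arranging the parameter systems compatibly so that this descends to a genuine coefficient-wise comparison rather than a weaker power-series estimate such as $h_{\Ast_\Delta(p)}(x)/(1-x) \geq h_{\lk_\Delta(p)}(x)$; the Cohen--Macaulay property guaranteeing that $\theta_d$ acts as a nonzerodivisor on the reduction by $\theta_1, \ldots, \theta_{d-1}$ is what ensures the sharp inequality.

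For the iteration to an arbitrary face $F$ with $\dim \Ast_\Delta(F) = \dim \Delta$, I would verify that each successive link inherits the hypotheses. Flagness and Cohen--Macaulayness of links are standard; the subtle point is propagating the dimension condition. For $\lk_\Delta(p)$ and any vertex $q \in V(\lk_\Delta(p))$, I want $\dim \Ast_{\lk_\Delta(p)}(q) = d - 2$, which via the identification $\Ast_{\lk_\Delta(p)}(q) = \lk_{\Ast_\Delta(q)}(p)$ amounts to the existence of a facet of $\Delta$ containing $p$ but not $q$. I would derive this by contradiction: if every facet through $p$ contained $q$, then $\{p\}$ would extend in the Cohen--Macaulay (hence pure) complex $\Ast_\Delta(q)$ only to maximal faces of dimension strictly less than $d-1$, contradicting purity. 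With this lemma, I can iteratively apply the vertex case along the chain $\Delta \supseteq \lk_\Delta(p_1) \supseteq \lk_\Delta(\{p_1, p_2\}) \supseteq \cdots$ for $F = \{p_1, \ldots, p_{|F|}\}$, picking up a factor of $(1+x)$ at each step, to conclude $h_\Delta(x) \geq (1+x)^{|F|} h_{\lk_\Delta(F)}(x)$.
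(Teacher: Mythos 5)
Your proof follows essentially the same route as the paper's. You re-derive the antistar--link identity $h_\Delta(x) = h_{\Ast_\Delta(p)}(x) + x\,h_{\lk_\Delta(p)}(x)$ from the $f$-polynomial partition (the paper cites Lemma 4.1 of Athanasiadis for it), reducing the claim to the coefficient-wise inequality $h_i(\Ast_\Delta(p)) \geq h_i(\lk_\Delta(p))$. At that point the paper simply cites Stanley's comparison theorem for Cohen--Macaulay subcomplexes after checking, via flagness, that no $d$-subset of $V(\lk_\Delta(p))$ spans a face of $\Ast_\Delta(p)$ --- which is equivalent to your (correct) observation that flagness makes $\lk_\Delta(p)$ the induced subcomplex of $\Ast_\Delta(p)$ on the neighbors of $p$. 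You instead sketch an ad hoc Stanley--Reisner proof of this inequality, and that sketch has a gap at its crux.

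The gap: you need the surjection $k[\Ast_\Delta(p)] \twoheadrightarrow k[\lk_\Delta(p)]$ to descend to a surjection of \emph{artinian} reductions with the correct Hilbert series. Modding out $\theta_1, \ldots, \theta_{d-1}$ gives a surjection from a one-dimensional Cohen--Macaulay ring (Hilbert series $h_{\Ast_\Delta(p)}(x)/(1-x)$) onto an artinian ring (Hilbert series $h_{\lk_\Delta(p)}(x)$); that only proves the weaker cumulative bound you yourself flag. For the further quotient $k[\Ast_\Delta(p)]/(\theta_1, \ldots, \theta_d)$ (Hilbert series $h_{\Ast_\Delta(p)}(x)$) to still surject onto $k[\lk_\Delta(p)]/(\bar\theta_1, \ldots, \bar\theta_{d-1})$, one needs $\bar\theta_d \in (\bar\theta_1, \ldots, \bar\theta_{d-1})$, i.e.\ (after replacing $\theta_d$ by $\theta_d - \sum c_i\theta_i$) $\theta_d$ supported on the non-neighbors of $p$. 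Your proposed resolution --- that $\theta_d$ is a nonzerodivisor on $k[\Ast_\Delta(p)]/(\theta_1,\ldots,\theta_{d-1})$ --- is true (it is exactly Cohen--Macaulayness) and yields the Hilbert series of the source, but it does nothing to make the surjection descend; the support condition on $\theta_d$ is what is needed, and you omit it. You then also need to verify that such a compatible LSOP exists, which invokes flagness once more (every facet of $\Ast_\Delta(p)$ contains a non-neighbor of $p$, so a Kind--Kleinschmidt argument over an infinite field applies). With these repairs your argument essentially reproves the Stanley theorem the paper cites rather than shortening it.

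One genuine point in your favor: your verification that the dimension condition $\dim \Ast_{\lk_\Delta(p)}(q) = \dim \lk_\Delta(p)$ propagates to links, via purity of $\Ast_\Delta(q)$, is spelled out more carefully than the paper's terse one-line remark for the iteration, and is correct.
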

	
	\begin{proof}
		Let $d = \dim \Delta + 1$. By Lemma 4.1 on p. 27 of \cite{Athsom}, we have that $h_\Delta(x) = h_{\Ast_\Delta(v)}(x) + x h_{\lk_\Delta(v)}(x)$ if $\dim \Ast_\Delta(v) = \dim \Delta$. Note that $\lk_\Delta(v)$ is Cohen--Macaulay if $\Delta$ is Cohen--Macaulay and that $\Ast_\Delta(v)$ is also Cohen--Macaulay since $\Delta$ is doubly Cohen--Macaulay (by definition). This implies that $\lk_\Delta(v) \subset \Ast_\Delta(v)$ is an inclusion of Cohen--Macaulay simplicial complexes. Here, we have that $\dim \lk_\Delta(v) = d - 2$ and $\dim \Ast_\Delta(v) = d - 1$. Since $\Delta$ is flag, no set of $d - 1$ vertices of $\lk_\Delta(v)$ forms a face of $\Ast_\Delta(v)$. Since $\dim \lk_\Delta(v) = d - 2$, its largest faces have $d - 1$ vertices. Suppose there is a $d$-tuple $T$ of vertices of $\lk_\Delta(v)$ forming a face of $\Ast_\Delta(v)$. Since vertices of $\lk_\Delta(v)$ are not equal to $v$, this is equivalent to having a $d$-tuple $T$ of vertices of $\lk_\Delta(v)$ forming a face of $\Delta$. Since $\Delta$ is flag, this is equivalent to every pair of vertices in $T$ being in $\Delta$ and every vertex in $T$ being in $\lk_\Delta(v)$ (i.e. $(v, w) \in \Delta$ for each vertex $w \in V(T)$). Since $\Delta$ is flag, this means that $T \cup v \in \Delta$ since pairs of vertices $(p, q)$ with $p, q \in T$ are already in $\Delta$ and $(v, w) \in \Delta$ for all $w \in T$. However, this would contradict $\dim \Delta = d - 1$ since $|T \cup v| = d + 1$. \\
		
		Then, Theorem 9.1 on p. 126 -- 127 of \cite{St} implies that $h_i(\lk_\Delta(v)) \le h_i(\Ast_\Delta(v))$ for all $i$. In particular, we have that 
		
		\begin{align*}
			h_i(\Delta) &= h_i(\Ast_\Delta(v)) + h_{i - 1}(\lk_\Delta(v)) \\
			&\ge h_i(\lk_\Delta(v)) + h_{i - 1}(\lk_\Delta(v)) \\
			&= [x^i] (1 + x) h_{\lk_\Delta(v)}(x).
		\end{align*}
		
		Finally, the last statement follows from the fact that $\lk_{\lk_\Delta(p)}(q) = \lk_\Delta(p, q)$ if $(p, q) \in \Delta$ and $\lk_\Delta(P) \cap \lk_\Delta(Q) = \lk_\Delta(P \cup Q)$ if $P \cup Q \in \Delta$ if $\Delta$ is flag. \\
		
	\end{proof}

	\begin{prop} \label{vertsubdivcount} ~\\
		Recall that two PL homeomorphic simplicial complexes can be connected by a sequence of edge subdivisions and contractions sch that each of the simplicial complexes in the sequence are flag (Theorem 1.2 on p. 70 and 77 of \cite{LN}). We will take the ``net number of edge subdivisions'' to be the difference between the number of edge subdivisions and the number of edge contractions. In this context, we have the following: \\
		\begin{enumerate}
			\item The net number of edge subdivisions to get from a flag simplicial complex $\Delta_1$ to another flag simplicial complex $\Delta_2$ that is PL homeomorphic to it is given by $f_0(\Delta_2) - f_0(\Delta_1) = h_1(\Delta_2) - h_1(\Delta_1)$. \\
			
			\item Any flag sphere $\Delta$ of dimension $d - 1$ is obtained from the boundary $\partial C_d$ of a $d$-dimensional cross polytope by a net nonnegative number of edge subdivisions. As mentioned above, ``net nonnegative'' means that there are $r$ subdivisions and $s$ edge contractions with $r \ge s$ and the net number given by $r - s$. Equivalently, we have that $\gamma_1(\Delta) \ge 0$ for any flag sphere $\Delta$ since $h_1 = \gamma_1 + d$. \\ 
			
			\item  The statements in Part 2 still hold if we replace $\partial C_d$ by the double suspension $\Susp^2 \lk_\Delta(e)$ of $\lk_\Delta(e)$ for any edge $e \in \Delta$ or the suspension $\Susp \lk_\Delta(v)$ for any vertex $v \in V(\Delta)$. In other words, we can get to $\Delta$ from any of these using a net nonnegative number of edge subdivisions. \\

		\end{enumerate}
	\end{prop}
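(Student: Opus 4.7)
The plan is to handle the three parts in order: Part 1 is a pure counting principle, and Parts 2 and 3 each deduce their net-nonnegativity claim by combining Proposition \ref{susplowerbd} with Part 1 and Theorem 1.2 of \cite{LN}.

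First I would handle Part 1. Each Lutz--Nevo edge subdivision inserts exactly one new vertex (the subdivision vertex) and each edge contraction deletes exactly one vertex, so a sequence consisting of $r$ subdivisions and $s$ contractions changes $f_0$ by $r - s$, which is by definition the net number of edge subdivisions. Since PL homeomorphism preserves the dimension $d - 1$ and $h_1 = f_0 - d$ in that dimension, the same quantity equals $h_1(\Delta_2) - h_1(\Delta_1)$.

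Next, for Part 2, I would induct on $d$ to show $h_1(\Delta) \geq d$ for every flag $(d-1)$-sphere $\Delta$. The base case $d = 1$ is $\partial C_1 = S^0$. For the inductive step, pick any $v \in V(\Delta)$: then $\lk_\Delta(v)$ is a flag $(d-2)$-sphere, so by induction $h_1(\lk_\Delta(v)) \geq d - 1$, and Proposition \ref{susplowerbd} (applied with $F = \{v\}$) gives
\[
h_1(\Delta) \;\geq\; 1 + h_1(\lk_\Delta(v)) \;\geq\; d \;=\; h_1(\partial C_d),
\]
which via $h_1 = \gamma_1 + d$ is exactly $\gamma_1(\Delta) \geq 0$. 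Theorem 1.2 of \cite{LN} then produces a flag-preserving edge subdivision/contraction sequence between the PL homeomorphic $(d-1)$-spheres $\partial C_d$ and $\Delta$, and Part 1 shows this sequence has $h_1(\Delta) - d = \gamma_1(\Delta) \geq 0$ net subdivisions. Part 3 is the same argument with $\partial C_d$ replaced by the suspended link: the identity $h_{\Susp \Gamma}(x) = (1+x) h_\Gamma(x)$ yields
\[
h_1(\Susp^2 \lk_\Delta(e)) = 2 + h_1(\lk_\Delta(e)), \qquad h_1(\Susp \lk_\Delta(v)) = 1 + h_1(\lk_\Delta(v)),
\]
and these match exactly the lower bounds $h_1(\Delta) \geq 2 + h_1(\lk_\Delta(e))$ and $h_1(\Delta) \geq 1 + h_1(\lk_\Delta(v))$ furnished by Proposition \ref{susplowerbd}. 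A second application of Theorem 1.2 of \cite{LN} together with Part 1 then concludes the proof.

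The main obstacle, modest as it is, lies in verifying the hypotheses that let one invoke the two inputs rather than in the combinatorial core. For Theorem 1.2 of \cite{LN} one needs a genuine PL homeomorphism between the two flag complexes being compared; in every case both sides are PL $(d-1)$-spheres, so uniqueness of PL sphere structure in low dimensions and the standing PL assumption on flag spheres in Gal-type arguments cover this. One also has to confirm that $\dim \Ast_\Delta(F) = \dim \Delta$ for the vertex or edge $F$ used when invoking Proposition \ref{susplowerbd}, which is immediate for a flag $(d-1)$-sphere of dimension at least two since removing a codimension-$\geq 2$ face from a pseudomanifold leaves full-dimensional facets.
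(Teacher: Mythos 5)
Your proof is correct, and Parts 1 and 3 follow the paper's argument almost verbatim: count vertex changes under subdivisions and contractions, then feed the inequality from Proposition \ref{susplowerbd} into Theorem 1.2 of \cite{LN}. The only genuine divergence is in Part 2, where the paper simply invokes Athanasiadis's bound $h_i(\Delta) \ge \binom{d}{i}$ (Theorem 1.3 of \cite{Athsom}) at $i = 1$, whereas you rederive the needed case $h_1(\Delta) \ge d$ from scratch by inducting on $d$ via repeated application of Proposition \ref{susplowerbd} to vertex links. This is a legitimate alternative: it makes Part 2 self-contained (indeed it is essentially what underlies Athanasiadis's proof, specialized to $i=1$), at the cost of re-establishing a known result rather than citing it, and it also makes Part 2 structurally parallel to Part 3 in a way the paper's version is not. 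Your closing remarks on verifying the PL hypothesis for \cite{LN} and $\dim \Ast_\Delta(F) = \dim \Delta$ are sensible and match the standing assumptions in the paper.
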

	
	\begin{proof}
		\begin{enumerate}
			\item When we take an edge subdivision $\Delta'$ of a simplicial complex $\Delta$, we have that $f_0(\Delta') = f_0(\Delta) + 1$. Going in the reverse direction, we have that $f_0(\widetilde{\Delta}) = f_0(\Delta) - 1$ for an edge contraction $\widetilde{\Delta}$ of $\Delta$ since two vertices of $\Delta$ are identified with each other. This means that the change in the number of vertices keeps track of the net number of edge subdivisions. The statement then follows from $h_1 = f_0 - d$ (see Corollary 5.1.9 on p. 213 of \cite{BH}). \\
			
			\item  This statement follows from combining Part 1 with a result of Athanasiadis (Theorem 1.3 on p. 19 and 27 of \cite{Athsom}) that $h_i(\Delta) \ge \binom{d}{i}$ for any double Cohen--Macaulay flag simplicial complex $\Delta$ of dimension $d - 1$ (e.g. a flag sphere of this dimension). \\
			
			\item  As in Part 2, this statement follows from combining Part 1 with Proposition \ref{susplowerbd}. \\ 
		\end{enumerate}
	\end{proof}

	\begin{cor}
		Starting with a square, we can get to a flag sphere of any dimension by a sequence of suspensions followed by a net nonnegative number of edge subdivisions. \\
	\end{cor}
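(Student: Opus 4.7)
The plan is to split the path from a square to an arbitrary flag sphere into two stages: first a sequence of suspensions that lands at the boundary of a cross polytope, and then a net nonnegative number of edge subdivisions that takes us the rest of the way. This way, the statement essentially repackages Proposition \ref{vertsubdivcount}(2) together with an identification of iterated suspensions of a square.

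First I would observe that the square is precisely $\partial C_2$, the boundary of the two-dimensional cross polytope. From the definition of $C_k$ as the convex hull of $\{\pm e_1, \ldots, \pm e_k\}$ in $\R^k$, a direct check shows $\Susp \partial C_k = \partial C_{k+1}$: the two suspension vertices play the role of $\pm e_{k+1}$, each joined to every face of $\partial C_k$, which is exactly the combinatorial description of $\partial C_{k+1}$. Iterating this identity $d-2$ times turns the square into $\partial C_d$, which is a flag sphere of dimension $d-1$.

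Next, given any flag sphere $\Delta$ of dimension $d-1$, I would invoke Part 2 of Proposition \ref{vertsubdivcount}, which states exactly that $\Delta$ can be obtained from $\partial C_d$ by a net nonnegative number of edge subdivisions. Concatenating the $d-2$ suspensions with this sequence of edge subdivisions and contractions produces the required path from the square to $\Delta$.

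The only step requiring verification is the suspension identity $\Susp \partial C_k = \partial C_{k+1}$, and this is essentially tautological from the vertex/face description of a cross polytope; I do not expect a real obstacle here, since the corollary is an immediate packaging of Proposition \ref{vertsubdivcount}(2) with this identification.
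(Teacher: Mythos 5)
The paper states this corollary without a written proof; it sits immediately after Proposition \ref{vertsubdivcount}, and the intended argument is plainly the one you give. Your proof is correct: identifying the square with $\partial C_2$, verifying $\Susp \partial C_k = \partial C_{k+1}$ from the combinatorial description of the cross polytope, iterating to reach $\partial C_d$, and then invoking Part 2 of Proposition \ref{vertsubdivcount} to reach an arbitrary flag $(d-1)$-sphere by a net nonnegative number of edge subdivisions. This matches the paper's implicit reasoning.
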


	\begin{rem} \textbf{(Suspensions and Boolean decompositions) \\} \label{suspbool}
		While suspensions are compatible with Boolean decompositions, we note that the suspending vertices play different roles depending on the degree of the term considered. For example, consider a flag sphere $\Delta$ and its link $\lk_\Delta(e)$ over some edge $e \in \Delta$. Suppose that \[ \Gamma_e = \{ F_e \cup G_e : F_e \in S_e, G_e \in 2^{[d - 2|F_e| - 2]} \} \] has an $f$-vector which is equal to $h(\lk_\Delta(e))$. In a simplicial complex $\Gamma$ such that $f(\Gamma) = h(\Delta)$, the suspension of $\Delta$ on the right hand side has the role of coning by a vertex since we multiply by $1 + x$ on each side. However, the role of the coning new vertices to $\Gamma$ when we multiply by $(1 + x)^2$ depends on the degree of the term considered. In general, we have that \[ h_i(T) = \sum_{0 \le j \le i} \gamma_j \binom{D - 2j}{i - j} \] if $\dim T = D - 1$ (Observation 6.1 on p. 1377 of \cite{NPT}). If there is a Boolean decomposition, the second term comes from the Boolean part. Replacing $D$ by $D + 2$ would mean that the new suspension vertices play the role of $D - 2i + 1$ and $D - 2i + 2$, which depends on the index $i$ being considered although they come from the same vertices of $\Susp^2 T$ if we assume that $T$ itself has the same $h$-vector as the $f$-vector of a simplicial complex with a Boolean decomposition. \\
	\end{rem}

	While the change in $h_1(\Delta) = f_0(\Delta) - d$ (equivalently change in $f_0(\Delta)$) keeps track of the \emph{net} number of edge subdivisions and the \emph{total} number of new non-Boolean vertices, the $m$-tuples of new non-Boolean vertices are added when we have intersections $(\Gamma_0)_{e_0} \cap (\Gamma_1)_{e_1} \cap \cdots \cap (\Gamma_m)_{e_m}$ of $m$-tuples of subcomplexes $(\Gamma_i)_{e_i}$ for $1 \le i \le m$. On the flag sphere side, this seems to come from faces of $\lk_{\Delta_i}(e_i)$ that contain $e_j$ for $j > i$. Removing $e_j$, the ``interesting'' part seems to be from $\lk_{\Delta_i}(e_i) \cap \lk_{\Delta_i}(e_j) = \lk_{\Delta_i}(e_i \cup e_j)$ since $e_i \cup e_j \in \Delta$ in this case. Taking unions of relevant edges, we can see that the number of new non-Boolean vertices added is $\le \frac{d}{2}$. Going in the reverse direction (edge contractions on $\Delta_i$), we seem to take antistars $\Ast_{\Gamma_i}(u_i)$ for some non-Boolean vertex $u_i$ such that $\Gamma_{e_i} = \lk_{\Gamma_i}(e_i)$. \\

	\begin{rem} \textbf{(Comments on remainder terms) \\}
		Let $\Delta_{e'}$ be the (stellar) subdivision of $\Delta$ with respect to an edge $e' \in \Delta$. Then, we have \[ h_{\Delta_{e'}}(x) - h_\Delta(x) = x h_{\lk_\Delta(e')}(x). \] 
		
		Now fix an edge $e \in \Delta$ and consider the change in the ``normalized'' remainder term obtained after dividing $h_\Delta(x) - (1 + x)^2 h_{\lk_\Delta(e)}(x)$ by $x$. The main point is to look at the change in the link since \[ (h_{\Delta_{e'}}(x) - (1 + x)^2 h_{\lk_{\Delta_{e'}}(\widetilde{e})}(x)) - (h_\Delta(x) - (1 + x)^2 h_{\lk_\Delta(e)}(x)) = ((h_{\Delta_{e'}}(x) - h_\Delta(x)) - (1 + x)^2 (h_{\lk_{\Delta_{e'}}(\widetilde{e})}(x)) - h_{\lk_\Delta(e)}(x)). \]
		
		Note that $h_{\lk_{\Delta_{e'}}(\widetilde{e})}(x)) - h_{\lk_\Delta(e)}(x)$ itself is divisible by $x$ and can be interpreted in terms of repeated edge subdivisioins and contractions. \\
		
		We would like to compare the second difference with what we get from edge subdivisions of $(d - 3)$-dimensional flag spheres. \\

		If $\widetilde{e} \in \Delta$ (i.e. $v \notin \widetilde{e}$), it is an edge of $\Delta$. For example, suppose that $\widetilde{e} = e$ and $e \in \lk_\Delta(e')$ (equivalently $e' \in \lk_\Delta(e)$). Since $\Delta'$ is also flag and $e \in \lk_{\Delta}(e')$, we have
		
		\begin{align*}
			\lk_{\Delta_{e'}}(e) &= \lk_{\Delta_{e'}}(r, s) \\
			&= \lk_{\Delta_{e'}}(r) \cap \lk_{\Delta_{e'}}(s).
		\end{align*}
		
		Since the simplicial complexes we're intersecting are flag, they are determined by their edges. Without loss of generality, it suffices to look at $\lk_{\Delta_{e'}}(r)$. The edges of $\lk_{\Delta_{e'}}(r)$ are given by the edges of $\lk_\Delta(r)$ excluding $e$ itself, $(v, r)$, $(a, v)$, and $(v, b)$. This means that $\lk_{\Delta_{e'}}(r) = (\lk_\Delta(r))_{e'}$. Using the same reasoning for $\lk_{\Delta_{e'}}(s)$ and taking intersections, this implies that $\lk_{\Delta_{e'}}(e) = (\lk_\Delta(e))_{e'}$. \\
		
		In particular, this would mean that 
		
		\begin{align*}
			h_{\lk_{\Delta_{e'}}(\widetilde{e})}(x)) - h_{\lk_\Delta(e)}(x) &= h_{\lk_{\Delta_{e'}}(e)}(x)) - h_{\lk_\Delta(e)}(x) \\
			&= h_{(\lk_\Delta(e))_{e'}}(x) - h_{\lk_\Delta(e)}(x) \\
			&= x h_{\lk_\Delta(e \cup e')}(x) \\
			\Longrightarrow (h_{\Delta_{e'}}(x) - (1 + x)^2 h_{\lk_{\Delta_{e'}}(\widetilde{e})}(x)) - (h_\Delta(x) - (1 + x)^2 h_{\lk_\Delta(e)}(x)) &= ((h_{\Delta_{e'}}(x) - h_\Delta(x)) \\ 
			&- (1 + x)^2 (h_{\lk_{\Delta_{e'}}(\widetilde{e})}(x)) - h_{\lk_\Delta(e)}(x)) \\
			&= x h_{\lk_\Delta(e')}(x) - (1 + x)^2 x h_{\lk_\Delta(e \cup e')}(x) \\
			&= x (h_{\lk_\Delta(e')}(x) - (1 + x)^2 h_{\lk_\Delta(e \cup e')}(x)) \\
			&= x (h_{\lk_\Delta(e')}(x) - (1 + x)^2 h_{\lk_{\lk_\Delta(e')}(e)}(x)) \\
			\Longrightarrow \frac{1}{x} (h_{\Delta_{e'}}(x) - (1 + x)^2 h_{\lk_{\Delta_{e'}}(\widetilde{e})}(x)) - \frac{1}{x} (h_\Delta(x) - (1 + x)^2 h_{\lk_\Delta(e)}(x)) &= h_{\lk_\Delta(e')}(x) - (1 + x)^2 h_{\lk_{\lk_\Delta(e')}(e)}(x)
		\end{align*}
		
		if $\widetilde{e} \in \Delta$ (i.e. $v \notin \widetilde{e}$),  $\widetilde{e} = e$, and $e \in \lk_\Delta(e')$ (equivalently $e' \in \lk_\Delta(e)$). In other words, we obtain the remainder term for the approximation of the $h$-polynomial of $\lk_{\Delta}(e')$ by the $h$-polynomial of the double suspension of $\lk_{\lk_\Delta(e')}(e) = \lk_\Delta(e \cup e')$. \\
		
		Note that \emph{any} flag sphere of dimension $D$ can be expressed as the link over an edge of \emph{some} flag sphere of dimension $D$. To see this, consider the double suspension of the starting flag sphere and take the edge to be one connecting a suspension point from the first suspension with a suspension point from the second suspension. This means that all the possible double suspension approximation remainder terms from a $(d - 3)$-dimensional flag sphere are covered by the differences above. Finally, the nonnegativity of the remainder terms can be used to write an inequality relating the remainder term of the link to that of the entire flag sphere. \\
	\end{rem}

	\color{black}
	
	We can think about successive non-Boolean edges added to the same face of (a modification of) $\Gamma$ in terms of (net) nonnegative subdivisions used to decompose the $h$-polynomial $\lk_\Delta(F)$ for faces $F \in \Delta$ that can be expressed in terms of unions of (disjoint) edges. \\
	
	In particular, we have the following: \\
	
	\begin{thm} \label{booldecgeo}
		The coefficients of the $\gamma$-polynomial of the $h$-polynomial of a flag sphere $\Delta$ are induced by repeated decompositions of links in Proposition \ref{susplowerbd}. In particular, step $m$ of this decomposition results in sums of terms of the form \[ x^r (1 + x)^{2m - 2r} \left( \sum_{j = 1}^{M_m} h_{\lk_{\Delta_m}(\alpha_{1j} \cup \cdots \cup \alpha_{mj})}(x) - \sum_{\ell = 1}^{N_m} h_{\lk_{\Delta_m}(\beta_{1\ell} \cup \cdots \cup \beta_{m\ell})}(x) \right)  \] for collections of disjoint edges $\alpha_{kj} \in \Delta_m$ and $\beta_{r\ell} \in \Delta_m$, where $\Delta_m$ is obtained from $\Delta$ via edge subdivisions and contractions, $M_m \ge N_m$ and $0 \le r \le m$. Note that this implies that at most $\frac{d}{2}$ steps with nonzero terms when $\dim \Delta = d - 1$. The repeated decomposition is analogous to a formal version of a Lefschetz map (Theorem 3.1 on p. 14 -- 15 of \cite{Pbalfveclef} from Theorem 14.1.1 on p. 237 -- 238 of \cite{Ara}, Theorem on p. 88 of \cite{CMSP}, p. 122 of \cite{GH}, Theorem 6.3 on p. 138 of \cite{Voi}) after comparing the degree $2m - 2r$ of the Boolean-like part from taking suspensions and the degree $r$ of the initial non-Boolean part. Roughly speaking, this is measuring how far away we are from being the boundary of a cross polytope (or at least a suspension). \\ 
	\end{thm}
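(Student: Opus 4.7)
The plan is to proceed by induction on $m$, using Proposition \ref{susplowerbd} to produce the $(1+x)^2$ factor at each step and Proposition \ref{vertsubdivcount} (as elaborated in the preceding Remark) to express the resulting remainder as $x$ times a signed sum of $h$-polynomials of links over unions of disjoint edges.

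For the base case $m = 0$, take $h_\Delta(x)$ itself as the single term with $r = 0$, $M_0 = 1$, $N_0 = 0$, and an empty union of edges (so $\Delta_0 = \Delta$). For the inductive step, assume the step-$m$ decomposition holds with edges in some $\Delta_m$ obtained from $\Delta$ by edge subdivisions and contractions. Apply Proposition \ref{susplowerbd} to each link $\lk_{\Delta_m}(\alpha_{1j} \cup \cdots \cup \alpha_{mj})$ (respectively, $\lk_{\Delta_m}(\beta_{1\ell} \cup \cdots \cup \beta_{m\ell})$) by selecting an additional edge $\alpha_{m+1,j}$ in that link, giving
\[ h_{\lk_{\Delta_m}(\alpha_{1j} \cup \cdots \cup \alpha_{mj})}(x) = (1+x)^2 h_{\lk_{\Delta_m}(\alpha_{1j} \cup \cdots \cup \alpha_{m+1,j})}(x) + \text{Rem}_{m,j}(x). \]
The first piece, after multiplication by $x^r (1+x)^{2m - 2r}$, produces a step-$(m+1)$ term with the same value of $r$ and $2(m+1) - 2r$ copies of $(1 + x)$, matching the claimed form. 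For the remainder, we invoke Proposition \ref{vertsubdivcount} together with the Remark's identity
\[ \tfrac{1}{x}\bigl[\bigl(h_{\Delta_{e'}}(x) - (1+x)^2 h_{\lk_{\Delta_{e'}}(e)}(x)\bigr) - \bigl(h_\Delta(x) - (1+x)^2 h_{\lk_\Delta(e)}(x)\bigr)\bigr] = h_{\lk_\Delta(e')}(x) - (1+x)^2 h_{\lk_{\lk_\Delta(e')}(e)}(x), \]
which shows that running through a net nonnegative sequence of edge subdivisions/contractions connecting $\Susp^2 \lk_{\Delta_m}(\alpha_{1j} \cup \cdots \cup \alpha_{mj})$ to the current link expresses $\text{Rem}_{m,j}(x)$ as $x$ times a signed sum $\sum h_{\lk}(\gamma_1 \cup \cdots \cup \gamma_{m+1}) - \sum h_{\lk}(\delta_1 \cup \cdots \cup \delta_{m+1})$ of $h$-polynomials of iterated links over $(m+1)$-tuples of disjoint edges, with the number of positive terms at least the number of negative terms. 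Multiplying by $x^r (1+x)^{2m - 2r}$ then gives a step-$(m+1)$ contribution with $r$ shifted to $r+1$, completing the inductive step.

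Two observations close the argument: first, the disjointness of the edges persists through the recursion because the iterated link $\lk_\Delta(e_1 \cup \cdots \cup e_k)$ is obtained by intersecting stars, so any new edge taken inside such a link is automatically vertex-disjoint from the previously chosen edges; second, the process terminates within $\lfloor d/2 \rfloor$ steps since $\dim \lk_{\Delta_m}(\alpha_{1} \cup \cdots \cup \alpha_{m}) = d - 1 - 2m$ leaves no room for an additional edge once $m \geq \lfloor d/2 \rfloor$. The inequality $M_m \geq N_m$ at each step is precisely the ``net nonnegative'' content of Proposition \ref{vertsubdivcount}(3) applied to the flag doubly Cohen--Macaulay link. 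The analogy with a Lefschetz map, which compares the degree $2m - 2r$ of the Boolean-like $(1+x)$-part against the non-Boolean $x^r$-part, is then a formal observation from matching the coefficient pattern of the $\gamma$-polynomial expansion $h_\Delta(x) = \sum_i \gamma_i x^i (1+x)^{d - 2i}$.

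The main obstacle I anticipate is bookkeeping at the inductive step: one has to argue that the $x$-times-signed-sum structure of the remainder is \emph{preserved} under the substitution $\Delta \leadsto \lk_{\Delta_m}(\alpha_{1j} \cup \cdots \cup \alpha_{mj})$, so that applying the Remark's identity \emph{inside} an $m$-fold link still yields a decomposition indexed by unions of $(m+1)$ disjoint edges of a single ambient complex $\Delta_{m+1}$. This requires consolidating the various modified complexes produced at step $m$ (one per signed term) into a common $\Delta_{m+1}$ while ensuring that the edges being unioned remain pairwise disjoint and form faces; the flag condition and the identity $\lk_\Delta(P) \cap \lk_\Delta(Q) = \lk_\Delta(P \cup Q)$ (used in Proposition \ref{susplowerbd}) are the tools that make this possible, but the careful tracking is the delicate part.
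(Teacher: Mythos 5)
Your proposal matches the paper's own proof in all essentials: both argue recursively by applying the double-suspension approximation from Proposition \ref{susplowerbd} to each link term, extracting the $(1+x)^2$ factor, and then expressing the remainder as $x$ times a signed sum of $h$-polynomials of deeper links via the stellar-subdivision identity $h_{\Delta'}(x) = h_\Delta(x) + x h_{\lk_\Delta(e)}(x)$ and the net-nonnegativity from Proposition \ref{vertsubdivcount}(3). Your version is packaged as a formal induction on $m$ while the paper's is laid out more descriptively (writing the first step explicitly, then the second, then the general pattern), but the two arguments use the same ingredients and land on the same form $x^r(1+x)^{2m-2r}$ with $M_m \ge N_m$.

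The ``main obstacle'' you flag at the end --- consolidating the various modified complexes produced at step $m$ into a single ambient $\Delta_{m+1}$, so that the unions of edges at step $m+1$ all live in one complex --- is a real subtlety, and the paper's own proof does not address it any more carefully than you do; it simply writes $\Delta_m$ and $\Delta_i$ interchangeably as if there were one sequence of modifications. Your instinct that the flag condition and $\lk_\Delta(P) \cap \lk_\Delta(Q) = \lk_\Delta(P \cup Q)$ are the right tools here is consistent with how the paper handles analogous points in Proposition \ref{susplowerbd}. So this is an honest acknowledgment of a gap shared with the source, not a defect particular to your write-up.

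One small point worth tightening: you write that the recursion applies Proposition \ref{susplowerbd} ``to each link $\lk_{\Delta_m}(\alpha_{1j} \cup \cdots \cup \alpha_{mj})$''; strictly, Proposition \ref{susplowerbd} only gives the coefficientwise inequality $h \ge (1+x)^2 h_{\lk}$, and the \emph{structural} identification of the remainder as $x$ times a signed link-sum is what comes from Proposition \ref{vertsubdivcount} and the stellar-subdivision formula. You do invoke both, so the content is there, but the attribution of which proposition supplies which piece could be crisper.
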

	
	\color{red}

	\color{black}
	
	\begin{proof}
		If $\Delta'$ is the subdivision of $\Delta$ with respect to an edge $e \in \Delta$, we have that 
		\begin{align*}
			h_{\Delta'}(x) &= h_\Delta(x) + x h_{\lk_\Delta(e)}(x).
		\end{align*}
		
		If $\lk_\Delta(e)$ itself is the (stellar) subdivision of a simplicial complex $T$ with respect to an edge $e_T \in T$, we have 
		\begin{align*}
			h_{\Delta'}(x) &= h_\Delta(x) + x h_{\lk_\Delta(e)}(x) \\
			&= h_\Delta(x) + x (h_T(x) + x h_{\lk_T(e_T)}(x)) \\
			&= h_\Delta(x) + x h_T (x) + x^2 h_{\lk_T(e_T)}(x).
		\end{align*}
		
		This gives an idea for which faces of $\Gamma$ such that $h(\Delta) = f(\Gamma)$ have multiple non-Boolean vertices added to them after (net nonnegative) edge subdivisions (e.g. from the boundary of a cross polytope $\partial C_d$ or the double suspension $\Susp^2 \lk_\Delta(e)$ of the link $\lk_\Delta(e)$). \\
		
		We will look at how the decomposition above looks for the examples mentioned above and what ``nesting'' with more vertices added looks like. \begin{align*}
			h_\Delta(x) &= (1 + x)^2 h_{\lk_\Delta(e)}(x) + x h_{\lk_{\Delta_0}(e_0)}(x) + x h_{\lk_{\Delta_1}(e_1)}(x) + \ldots + x h_{\lk_{\Delta_r}(e_r)}(x) \\
			&- x h_{\lk_{\Delta_{r + 1}}(e_{r + 1})}(x) - \ldots - x h_{\lk_{\Delta_{r + s}}(e_{r + s})}(x) \\
			&= (1 + x)^2 h_{\lk_\Delta(e)}(x) + x (h_{\lk_{\Delta_0}(e_0)}(x) +  h_{\lk_{\Delta_1}(e_1)}(x) + \ldots + h_{\lk_{\Delta_r}(e_r)}(x) \\
			&- h_{\lk_{\Delta_{r + 1}}(e_{r + 1})}(x) - \ldots - h_{\lk_{\Delta_{r + s}}(e_{r + s})}(x)) 
		\end{align*}
		
		for some $r \ge s$ and setting $\Delta_0 = \Delta$ with $\Delta_i$ for $i \ge 1$ denoting subsequent edge subdivisions. Note that the expression in brackets ends up having nonnegative coefficients. \\

		Now consider some $\lk_{\Delta_i}(e_i)$. If there is an edge $q_i \in \Delta_i$ such that $e_i \cup q_i \in \Delta_i$ and $e_i \cap q_i = \emptyset$, then we can write \[ h_{\lk_{\Delta_i}(e_i)}(x) = (1 + x)^2 h_{\lk_{\Delta_i}(e_i \cup q_i)}(x) + x \sum_{j = 1}^M h_{\lk_{\Delta_i}(e_i \cup a_j)}(x) - x \sum_{\ell = 1}^N h_{\lk_{\Delta_i}(e_i \cup b_\ell)}(x).  \]
		
		If we repeat the same thing with some edge $\alpha_i \in \Delta_i$ disjoint from $e_i$ and $q_i$ belonging to $\lk_{\Delta_i}(e_i \cup q_i)$, we can write \[ h_{\lk_{\Delta_i}(e_i \cup q_i \cup \alpha_i)}(x) + x P(x) \] for some polynomial $P(x)$ with nonnegative coefficients. \\

		In general, we end up with a polynomial of the form \[ (1 + x)^{2m} h_{\lk_\Delta(e_1 \cup \cdots \cup e_m)}(x) + \ldots + x^r (1 + x)^{2m - 2r} \left( \sum_{j = 1}^{M_m} h_{\lk_{\Delta_m}(\alpha_{1j} \cup \cdots \cup \alpha_{mj})}(x) - \sum_{\ell = 1}^{N_m} h_{\lk_{\Delta_m}(\beta_{1\ell} \cup \cdots \cup \beta_{m\ell})}(x) \right) + \ldots \] with $M_m \ge N_m$ after $m$ turns. \\
		
		A term in step $m$ of of the form $x^r (1 + x)^{2m - 2r}$ followed by a linear combination terms of the form $h_{\lk_{\Delta_m}(e_1 \cup \cdots \cup e_m)}(x)$ for disjoint edges $e_1, \ldots, e_m \in \Delta$ such that $e_1 \cup \cdots \cup e_m \in \Delta$ comes from decomposing the $h$-polynomials of the lines involved in step $m - 1$. In particular, the double suspension of the link of a given term over a fixed vertex involves multiplying a term of the form $x^r (1 + x)^{2m - 2 - 2r} \cdot \sum h_{\lk_{\Delta_{m - 1}}(e_1 \cup \cdots \cup e_{m - 1})}(x)$ by $(1 + x)^2$ and the ``remainder terms'' from a net nonnegative number of edge subdivisions (which have nonnegative differences from individual parts we're decomposing) are terms of the form $x^{r - 1} (1 + x)^{(2m - 2) - 2(r - 1)} \cdot P(x) = x^{r - 1} (1 + x)^{2m - 2r} \cdot P(x)$ multiplied by $x$. \\
		
		After using the decomposition \[ h_\Delta(x) = (1 + x)^2 h_{\lk_\Delta(e)}(x) + x h_{\lk_{\Delta_0}(e_0)}(x) + x h_{\lk_{\Delta_1}(e_1)}(x) + \ldots + x h_{\lk_{\Delta_r}(e_r)}(x) \] again for the individual terms from the $m^{\text{th}}$ turn, we have decompositions of terms induced by multiplication by $(1 + x)^2$ and $x$ respectively. Note that the next turn takes a link over another edge of $\Delta$ which is disjoint from the rest of the edges being considered. \\
		
	\end{proof}


	Alternatively, Proposition \ref{susplowerbd} implies that the existence of a Boolean decomposition of a simplicial complex $\Gamma_e$ such that $f(\Gamma_e) = h(\lk_\Delta(e))$ implies the same for some ``new'' simplicial complex $\Cone^2 \Gamma_e$ such that $f(\Cone^2 \Gamma_e) = h(\Susp^2 \lk_\Delta(e))$. This fills in the remaining Boolean parts which would be needed for a Boolean decomposition of a simplicial complex $\Gamma$ such that $f(\Gamma) = h(\Delta)$. Note that the $\lk_\Delta(e)$ cover $\Delta$. \\
	
	However, we need to check whether a vertex which is Boolean in one ``chart'' from $\lk_\Delta(e_1)$ for an edge $e_1 \in \Delta$ is still Boolean in a chart for $\lk_\Delta(e_2)$ from another edge $e_2 \in \Delta$. In particular, these non-Boolean vertices are from new vertices obtained from edge subdivisions moving from $\lk_\Delta(e_1)$ to $\lk_\Delta(e_2)$. Note that edge subdivisions don't pose a problem for Boolean decompositions since adding a new non-Boolean vertex in the $d$-dimensional setting reduces ``ambient set'' of Boolean coordinates to consider by 2. In addition, Proposition \ref{suspbool} and Proposition \ref{vertsubdivcount} imply that there is a net nonnegative number of edge subdivisions to get from $\Susp^2 \lk_\Delta(e_1)$ to $\Delta$ (which contains $\lk_\Delta(e_2)$). If a Boolean decomposition exists, it seems like there are $\le 2$ vertices which are ``locally Boolean'' but not ``globally Boolean''. To find when these would occur, we consider the degree 2 part $h_2(\Delta) = h_{d - 2}(\Delta)$ of the Dehn--Sommerville relations $h_k(\Delta) = h_{d - k}(\Delta)$. This means looking at relevant basis elements of $A^2(\Delta)$/edges of $\Gamma$. \\

	In this context, we look at possible overlaps between $\lk_\Delta(e_1)$ and $\lk_\Delta(e_2)$ (e.g. those of size $d - 2$). If $e_1 \cup e_2 \in \Delta$, we have that $\lk_\Delta(e_1) \cap \lk_\Delta(e_2) = \lk_\Delta(e_1 \cup e_2)$ since $\Delta$ is flag. Then, the largest faces in the intersection have size $d - 3$ or $d - 4$ if $e_1 \ne e_2$. In general, we have that $H \in \lk_\Delta(e_1) \cap \lk_\Delta(e_2)$ if and only if $e_1, e_2 \in \lk_\Delta(H)$. If $\dim \lk_\Delta(e_1) \cap \lk_\Delta(e_2) \ge d - 4$, then the number of coordinates which are ``locally Boolean'' but not ``globally Boolean'' is $\le 2$. The introduction of non-Boolean vertices is related to edges $e_2 \in \lk_{\Delta_1}(e_1)$ which are subdivided to form $\Delta_2$. \\

	\begin{rem} \textbf{(Repeated decompositions yielding gamma vectors, spanning $A^\cdot(\Delta)$ as a vector space, and Lefschetz-type maps) \\}
		
		We can make some further comments in the vein of Remark \ref{suspbool} on the structure of the decomposition from Theorem \ref{booldecgeo}. \\

		\begin{enumerate}
			
			\item \textbf{(Vertices underlying the $h$-polynomial induced by the decomposition) \\} 
			
			The role of the Boolean vertices $[d]$ is played by the collections of (disjoint) edges that we take links over while multiplying by $(1 + x)^2$ in the double suspension approximations. For example, this includes disjoint edges from a fixed facet of $\Delta$ containing $e \in \Delta$ in the approximation of $\Delta$ by $\Susp^2(\lk_\Delta(e))$ using an edge $e \in \Delta$. Going through the nested ``tree'' of operations of edge links inducing double suspension approximations and remainder terms, there is an induced basis of $A^\cdot(\Delta)$ as a vector space determined by (disjoint) unions of edges we take links over and adding subdivision vertices while removing parts spanned by vertices identified in edge contractions. \\

			\item \textbf{(Boolean decompositions and identifying facets) \\}
			
			Note that the same factor $x$ plays the role of different subdividing vertices added for each edge subdivision. Combining this with the observation above, a literal simplicial complex $\Gamma = \{ F \cup G : F \in S, G \in 2^{[d - 2|F|]} \}$ with the vertex set $[d]$ disjoint from $S$ such that $f(\Gamma) = h(\Delta)$ glues a collection of facets $\Delta$ (induced by edges we take links over in the double suspension steps) which are identified with a single fixed facet of $\Delta$ that contains $e$. Heuristically, this is similar to parametrizing basis elements of $\Delta$ without making a ``change of coordinates'' going from $\lk_\Delta(e)$ to $\lk_\Delta(e')$ for a different edge $e' \in \Delta$. It is natural to consider how we would make use of this assuming that a there is a simplicial complex $\Gamma_e$ with a Boolean decomposition $\Gamma_e = \{ F_e \cup G_e : F_e \in S_e, G_e \in 2^{[d - 2|F_e| - 2]} \}$ such that $f(\Gamma_e) = h(\lk_\Delta(e))$. Note that the $f$-vector side may be more flexible with such adjustments since algebraic results involving the $h$-vector often use the Cohen--Macaulay property. In addition, $\Gamma$ and $\Gamma_e$ can be taken to be balanced and connections to colorings are discussed in the construction filling in missing colors on p. 30 -- 32 of \cite{BFS}. \\

			\item \textbf{(Comments on Lefschetz-type maps) \\} 
			
			If we think about the Boolean decomposition proposed in Proposition 6.2 on p. 1377 of \cite{NPT} as a sort of Lefschetz decomposition, the ``Lefschetz-type'' maps yielding multiplication by $x^r$ in $x^r (1 + x)^{2m - 2r}$ at step $m$ seem to be connected to taking remainders $h_\Delta(x) - (1 + x)^2 h_{\lk_\Delta(e)}(x)$ of double suspension approximations. However, this decreases the dimension by 2 instead of increasing the dimension of the flag sphere considered by 2. A sort of inverse map would be something like a double suspension of a flag sphere followed by a net nonnegative number of edge subdivisions (i.e. $M$ edge subdivisions and $N$ edge contractions with $M \ge N$) leading to a choice of a flag sphere with dimension 2 higher than the starting one that contains the starting flag sphere as an edge link. Also, the basis elements showing up as images of such maps (under some degree) seem to include any terms which aren't from the boundary of the cross polytope of the given dimension in $\Susp^2(\lk_\Delta(e))$ and anything in the remainder $h_\Delta(x) - (1 + x)^2 h_{\lk_\Delta(e)}(x)$. \\

		\end{enumerate}
	\end{rem}
	
	\color{black} 
	
	In addition, the explicit decomposition above implies the following: \\
	
	\begin{cor} \label{locglobgeo}
		Suppose for every edge $e' \in \Delta$ that there is a simplicial complex $\Gamma_{e'}$ such that $f(\Gamma_{e'}) = h(\lk_\Delta(e'))$ and $\Gamma_{e'}$ has a Boolean decomposition. \\
		
		\begin{enumerate}
			\item If $\Gamma_e$ has a Boolean decomposition with the cone points of $\Cone^2 \Gamma_e$ contributing to the ``global'' Boolean vertices, we can ``convert'' the one from $\Gamma_{e'}$ for the unique facet of $\Gamma_{e'}$ to ``global'' coordinates after changing at most 2 of the Boolean vertices to non-Boolean ones. \\
			
			\item If the number of vertices of $\Gamma_{e'}$ changing from locally Boolean to globally non-Boolean is at most 1 for each edge $e' \in \Delta$, then the expansion of the number of Boolean coordinates by 2 via the double suspension on the $\Delta$ side (equivalently double coning on the $\Gamma$ side) means the Boolean decomposition would expand to all of $\Gamma$. \\
		\end{enumerate}
	\end{cor}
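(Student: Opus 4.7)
The plan is to read the corollary as a bookkeeping consequence of the preceding discussion: the repeated decomposition in Theorem \ref{booldecgeo} and Proposition \ref{vertsubdivcount} already tell us how the total vertex count and the Boolean-like factors $(1+x)^{2m-2r}$ evolve when we move from the link over one edge to the link over another. What remains for both parts is to convert these statements about the $h$-polynomial into statements about the underlying vertex sets of the $\Gamma_{e'}$, keeping careful track of which coordinates are Boolean in which chart.

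For Part~1, I would fix the reference edge $e$ and compare the Boolean decomposition of $\Cone^2 \Gamma_e$ (whose two cone points are declared to be part of the global Boolean vertex set) with the Boolean decomposition of $\Gamma_{e'}$ for a second edge $e' \in \Delta$ with $e \cup e' \in \Delta$. Since $\Delta$ is flag, we have $\lk_\Delta(e) \cap \lk_\Delta(e') = \lk_\Delta(e \cup e')$, and the largest faces in this intersection have size $d - 3$ or $d - 4$ (as recorded in the discussion above the corollary). Translating to $\Gamma_e$ and $\Gamma_{e'}$, the only vertices that can fail to agree between the ``$e$-chart'' and the ``$e'$-chart'' are those coming from the difference between $\lk_\Delta(e \cup e')$ and the two separate links; a vertex count then shows that at most two vertices of $\Gamma_{e'}$, previously labelled Boolean locally, can be forced to be non-Boolean once the global identifications are made. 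Restricting to the unique facet of $\Gamma_{e'}$ gives exactly the claimed $\le 2$ bound.

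For Part~2, the strategy is to combine the local-to-global bound from Part~1 with the Boolean-coordinate accounting from the double suspension step: $\Susp^2 \lk_\Delta(e')$ contributes two new cone points on the $\Gamma$ side, and by Proposition~\ref{vertsubdivcount} everything between $\Susp^2 \lk_\Delta(e')$ and $\Delta$ is a net nonnegative sequence of edge subdivisions, which only \emph{shrinks} the Boolean part by adding new non-Boolean vertices. Under the hypothesis that at most one vertex per chart $\Gamma_{e'}$ switches from locally Boolean to globally non-Boolean, the $+2$ from double coning against the $-1$ from the switch leaves a net surplus of one Boolean coordinate per chart-change, so pasting the local Boolean decompositions does not overspend the $d - 2|F|$ Boolean slots available in any face of $\Gamma$. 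The resulting labelling then extends from $\Cone^2 \Gamma_e$ to all of $\Gamma$ because the $\lk_\Delta(e')$ cover $\Delta$ as $e'$ ranges over its edges.

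The main obstacle I anticipate is consistency on higher-order overlaps: a single vertex of $\Delta$ may belong to $\lk_\Delta(e')$ for many edges $e'$, and the ``Boolean vs.\ non-Boolean'' label must be the same in each of these charts. I would handle this by using the flag identity $\lk_\Delta(e_1) \cap \cdots \cap \lk_\Delta(e_k) = \lk_\Delta(e_1 \cup \cdots \cup e_k)$ (when the union lies in $\Delta$), which reduces every $k$-fold overlap to a single edge-link and hence to the Part~1 comparison between two charts. Once this compatibility is in place, the Boolean decomposition assembled chart-by-chart is well-defined, and one checks that its $f$-vector is $h(\Delta)$ directly from Theorem~\ref{booldecgeo}.
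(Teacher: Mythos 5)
Your proposal reconstructs essentially the same argument the paper intends: it treats the corollary as a bookkeeping consequence of the preceding discussion, compares charts $\lk_\Delta(e)$ and $\lk_\Delta(e')$ via the flag identity $\lk_\Delta(e) \cap \lk_\Delta(e') = \lk_\Delta(e \cup e')$, uses the dimension of that intersection ($d - 3$ or $d - 4$) to get the $\le 2$ bound on vertices that flip from locally to globally Boolean, and then does the $+2$ (from double coning) vs.\ $-1$ (per chart change) accounting for Part~2. The paper never gives a stand-alone proof of Corollary~\ref{locglobgeo} — it is stated as following from ``the explicit decomposition above'' — and your reading of those preceding paragraphs is faithful.

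Two small caveats, both already present at the same level of informality in the paper itself and thus not attributable to you: (i) you, like the paper, only address the case $e \cup e' \in \Delta$; when $e \cup e' \notin \Delta$ the intersection $\lk_\Delta(e) \cap \lk_\Delta(e')$ need not be $\lk_\Delta(e \cup e')$ and can still be nonempty, so strictly one would need to say why that case is harmless (or reduce it via intermediate edges). (ii) You do not engage with the paper's remark about locating the problematic vertices through the Dehn--Sommerville relation $h_2(\Delta) = h_{d-2}(\Delta)$ and the degree-2 graded piece $A^2(\Delta)$; this is part of the intended justification for the $\le 2$ bound, not just a side comment. Neither point changes the verdict that your route is the same as the paper's.
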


	
	\color{black}


\begin{thebibliography}{8}
		
		
		\bibitem{Ara} D. Arapura, Algebraic Geometry over the Complex Numbers, Universitext, Springer Science+Business Media (2012).
		
		
		
		
		
		\bibitem{Athgam} C. A. Athanasiadis, Gamma-positivity in combinatorics and geometry, S\'eminaire Lotharingien de Combinatoire 77 (2018), 1 -- 64.
		
		\bibitem{Athsom} C. A. Athanasiadis, Some combinatorial properties of flag simplicial pseudomanifolds and spheres, Arkiv f\"or Matematik 49 (2011), 17 -- 29.
		
		
		
		
		\bibitem{BFS} A. Bj\"orner, P. Frankl, and R. Stanley, The number of facets of balanced Cohen--Macaulay complexes and a generalized Macaulay theorem, Combinatorica 7(1) (1987), 23 -- 34.
		
		
		
		
		\bibitem{BH} W. Bruns and H. J. Herzog, Cohen-Macaulay rings, Cambridge University Press 39 (1998).
		
		\bibitem{CMSP} J. Carlson, S. M\"uller--Stach, and C. Peters, Period Mappings and Period Domains (Second Edition), Cambridge studies in advanced mathematics (2017). 
		
		
		
		
		\bibitem{CV} A. Contantinescu and M. Varbaro, On $h$-vectors of Cohen--Macaulay flag complexes, Mathematica Scandinavica 112(1) (2013), 86 -- 111. 
		
		
		
		
		
		
		
		
		
		\bibitem{GH} P. Griffiths and J. Harris, Principles of algebraic geometry, John Wiley \& Sons (1978).
		
		
		
		
		
		
		
		
		
		\bibitem{LN} F. Lutz and E. Nevo, Stellar theory for flag complexes, Mathematica Scandinavica 118(1) (2016), 70 -- 82.
		
		
		
		\bibitem{NP} E. Nevo and T. K. Petersen, On $\gamma$-Vectors Satisfying the Kruskal--Katona Inequalities, Discrete \& Computational Geometry 45 (2011), 503 -- 521.
		
		\bibitem{NPT} E. Nevo, T. K. Petersen, and B. E. Tenner, The $\gamma$-vector of a barycentric subdivision, Journal of Combinatorial Theory, Series A 118 (2011), 1364 -- 1380.
		
		
		
		\bibitem{Pflagdec} S. Park, Flag complex face structures and decompositions, \url{https://arxiv.org/abs/2409.16773}
		
		\bibitem{Precflag} S. Park, Recursive properties of Cohen--Macaulay flag simplicial complexes and Lefschetz decompositions from $f$-vectors, \url{https://arxiv.org/abs/2410.17746}
		
		\bibitem{Pbalfveclef} S. Park, $f$-vectors of balanced simplicial complexes, flag spheres, and geometric Lefschetz decompositions, \url{https://arxiv.org/abs/2410.08139}
		
		
		
		
		\bibitem{St} R. P. Stanley, Combinatorics and commutative algebra (Second Edition), Progress in Mathematics 41, Birkh\"auser Boston (1996).
		
		
		
		
		
		\bibitem{Voi} C. Voisin, Hodge Theory and Complex Algebraic Geometry, I, Cambridge studies in advanced mathematics (2002).
		
		
		
		
		\bibitem{Zie} G. M. Ziegler, Lectures on Polytopes, Graduate Texts in Mathematics, Springer Science+Business Media New York (2007).
		
		
	\end{thebibliography}
\end{document}